\newtheorem{theorem}{Theorem}
\theoremstyle{plain}
\newtheorem{corollary}[theorem]{Corollary}
\newtheorem{proposition}[theorem]{Proposition}
\numberwithin{equation}{section}
\newcommand{\rn}{\mathbb{R}^n}
\newcommand{\rnn}{\mathbb{R}^{2n}}
\newcommand{\R}{\mathbb{R}}
\begin{document}

\title[Strichartz estimates]{Strichartz estimates for the metaplectic representation}
\author{Alessandra Cauli, Fabio Nicola \and Anita Tabacco}
\address{Dipartimento di Scienze Matematiche, Politecnico di Torino, Corso
Duca degli Abruzzi 24, 10129 Torino, Italy}
\email{alessandra.cauli@polito.it}
%\author{Fabio Nicola} \address{Dipartimento di Scienze Matematiche, Politecnico di Torino, Corso Duca degli Abruzzi 24, 10129 Torino, Italy}
%\curraddr{\"{y} }
\email{fabio.nicola@polito.it}
\email{anita.tabacco@polito.it}
%\thanks{}  
\subjclass[2010]{42B35, 22E45}
\keywords{Dispersive estimates, Strichartz estimates, metaplectic representation, matrix coefficients, weak-type estimates, modulation spaces, Wigner distribution}
\date{}

\begin{abstract} Strichartz estimates are a manifestation of a dispersion phenomenon, exhibited by certain partial differential equations, which is detected by suitable Lebesgue space norms. In most cases the evolution propagator $U(t)$ is a one parameter group of unitary operators. Motivated by the importance of decay estimates in group representation theory and ergodic theory, Strichartz-type estimates seem worth investigating when $U(t)$ is replaced by a unitary representation of a non-compact Lie group, the group element playing the role of time. Since the Schr\"odinger group is a subgroup of the metaplectc group, the case of the metaplectic or oscillatory representation is of special interest in this connection.  We prove uniform weak-type sharp estimates for matrix coefficients and Strichartz estimates for that representation. The crucial point is the choice of function spaces able to detect such a dispersive effect, which in general will depend on the given group action. The relevant function spaces here turn out to be the so-called modulation spaces from Time-frequency Analysis in Euclidean space, and Lebesgue spaces with respect to Haar measure on the metaplectic group. The proofs make use in an essential way of the covariance of the Wigner distribution with respect to the metaplectic representation. 
\end{abstract}

\maketitle

\maketitle
\section{Introduction and statement of the main results}

Strichartz estimates represent one of the main research theme in modern Harmonic Analysis and Partial Differential Equations. The literature in this connection is growing incredibly fast, and new results are often applied to wellposedness and scattering of nonlinear PDEs, see \cite{Tao} and the references therein.\par
 Maybe the simplest case is given by the free Schr\"odinger equation in $\mathbb{R}^n$. The corresponding propagator $U(t)=e^{it\Delta}$ is easily proved to satisfy the so-called {\it dispersive estimate}:
\[\|U(t)\psi\|_{L^\infty}\lesssim |t|^{-n/2}\|\psi\|_{L^1}.
\]
One then deduces mixed-norm estimates, known as {\it Strichartz estimates}, which read
 \[
 \|U(t)\psi\|_{L^q(\mathbb{R};L^r(\mathbb{R}^n))}\lesssim\|\psi\|_{L^2(\mathbb{R}^n)}\] 
 for $\frac{2}{q}+\frac{n}{r}=\frac{n}{2}$, $2\leq q$, $r\leq \infty$ and $(q,r,n)\not=(2,\infty,2)$.
\par
Strichartz estimates are a manifestation of two effects: compared with the basic $L^2$-conservation law, corresponding to the pair $q=\infty,r=2$, the other pairs express\medskip
\begin{itemize}
\item a gain (loss) of space (time) local regularity, \smallskip
\item a gain (loss) of time (space) decay at infinity. 
\end{itemize}\medskip
Dispersive and Strichartz estimates hold, for different ranges of exponents, and possibly with a loss of derivatives, for several classes of equations, even on manifolds, homogeneous spaces, etc. In general, the evolution propagator $U(t)$ is a strongly continuous unitary representation of the abelian group $\R$. Now, for a non-compact abelian group $G$, the {\it irreducible} unitary representations are one-dimensional and their matrix coefficients are just (multiples of) the group characters, with no decay at all.
The above decay is in part due to a lack of ``coherence'' of the irreducible components of $U(t)$: frequency components move in different directions and, in some cases, with different speeds. \par 
Motivated by the importance of decay estimates in representation theory and ergodic theory (see e.g.\ \cite{howe2,moore} and the references therein), Strichartz-type estimates seem worth investigating for strongly continuous unitary representations
$
\mu:G\to \mathcal{U}(H)
$
of a non-compact locally compact Hausdorff group $G$, where $H$ is a Hilbert space. The representation $\mu(g)$ plays now the role of the above propagator $U(t)$. 
Generally speaking, we are interested in estimates of the type 
\begin{equation}\label{coeff0}
||\mu(g) \psi\|_{L^q(G; X_\theta)}\lesssim\|\psi\|_{H}
\end{equation}
for some scale of Banach spaces $X_\theta$, valid for a range of pairs $(q,\theta)$. \par
In this note we develop this idea for the metaplectic group $G=Mp(n,\R)$, that is the double covering of the symplectic group $Sp(n,\R)$, and the corresponding metaplectic, or oscillatory, representation, first constructed by Segal and Shale \cite{26,27} in the framework
of quantum mechanics (see also van Hove \cite{38}) and by Weil \cite{39} in number theory. 
This is a strongly continuous unitary representation of $Mp(n,\R)$ in $L^2(\rn)$, which turns out to be faithful, so that we can think of $Mp(n,\R)$ as a subgroup of $\mathcal{U}(L^2(\rn))$, and the representation given just by the inclusion. Following \cite{DeGosson} we will therefore denote by $\widehat{S}$ a metaplectic operator and by $S=\pi(\widehat{S})\in Sp(n,\R)$ its projection in the symplectic group (the construction of the metaplectic representation is briefly recalled in Section \ref{sec2} below).\par
Now, it turns out that the operator $e^{it\Delta}$ is a particular metaplectic operator, so that a natural candidate for the spaces $X_\theta$ in \eqref{coeff0} would seem to be the Lebesgue spaces. However, the Fourier transform is itself a metaplectic operator, and therefore we should actually look for spaces invariant with respect to the action of the Fourier transform. $U(n)$-invariance (see Section \ref{sec4}) finally suggests, as right function spaces, the modulation spaces $M^p$, widely used in Time-frequency Analysis \cite{DeGosson,Grochenig}.\par In short, for a given Schwartz function $\varphi\in\mathcal{S}(\rn)\setminus\{0\}$, consider the time-frequency shifts $\varphi_{z}(y)=e^{i\xi\cdot y}\varphi(y-x)$, $z=(x,\xi)\in\rn\times\rn$. Then for $1\leq p\leq\infty$ we define the $M^p$ norm of $\psi\in\mathcal{S}'(\rn)$,  as 
\[
\|\psi\|_{M^p}=\Big(\int_{\rnn} | \langle \psi,\varphi_z\rangle |^p\, dz\Big)^{1/p}
\]
(with obvious changes when $p=\infty$). Different windows $\varphi$ give equivalent norms. 
We have $\mathcal{S}(\rn)\subset M^p\subset \mathcal{S}'(\rn)$ for every $1\leq p\leq \infty$, $M^2=L^2(\rn)$, $M^p\subset M^q$ if and only if $p\leq q$,  $(M^p)'=M^{p'}$ if $p<\infty$. Modulation space norms measure the phase space concentration of a function; roughly speaking we can think of a function in $M^p$ as a function having $L^p$ decay at infinity and $F L^p$ local regularity. 
Let us also observe that modulation spaces have been recently applied in PDEs by several authors, see e.g. \cite{cordero3,cordero1,cordero2,ruzhansky,wang} and the references therein (some of their properties are collected in Section \ref{sec2}). \par\medskip
We follow the usual pattern, namely we begin with a dispersive-type estimate. 
\begin{theorem}[Dispersive estimate]\label{mainthm}
The following estimate holds: 
\begin{equation}\label{dispersiva}
\|\widehat{S}\psi\|_{M^{\infty}}\lesssim (\lambda_{1}(S)\ldots\lambda_{n}(S))^{-1/2}\|\psi\|_{M^1}
\end{equation}
for $\widehat{S}\in Mp(n,\mathbb{R}),\ \psi\in\mathcal{S}(\rn)$, where $\lambda_{1}(S),\ldots,\lambda_{n}(S)$ are the singular values $\geq 1$ of $S=\pi(\widehat{S})\in Sp(n,\mathbb{R})$.
\end{theorem}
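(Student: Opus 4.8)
\emph{Step 1 (reduction to a matrix coefficient).} The plan is to prove the stronger statement that $\widehat{S}\colon M^1\to M^\infty$ is bounded with norm $\lesssim(\lambda_1(S)\cdots\lambda_n(S))^{-1/2}$, by reducing the estimate, via the Gabor inversion formula and the metaplectic covariance, to an explicit Gaussian computation. Fix an $L^2$-normalized Gaussian window $g\in\mathcal{S}(\rn)$; all $M^p$ norms below are computed with this window, which only affects constants. From $\psi=\int_{\rnn}\langle\psi,\pi(z)g\rangle\,\pi(z)g\,dz$ one obtains, for every $w\in\rnn$, $\langle\widehat{S}\psi,\pi(w)g\rangle=\int_{\rnn}\langle\psi,\pi(z)g\rangle\,\langle\widehat{S}\pi(z)g,\pi(w)g\rangle\,dz$, whence $\|\widehat{S}\psi\|_{M^\infty}\le\|\psi\|_{M^1}\,\sup_{z,w}|\langle\widehat{S}\pi(z)g,\pi(w)g\rangle|$. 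Using the intertwining relation $\widehat{S}\pi(z)=c(z,S)\,\pi(Sz)\widehat{S}$ with $|c(z,S)|=1$, together with the identity $|\langle\pi(u)\eta,\pi(v)g\rangle|=|V_g\eta(v-u)|$ and the fact that $(z,w)\mapsto w-Sz$ maps onto $\rnn$, this reduces the claim to $\|V_g(\widehat{S}g)\|_{L^\infty}\lesssim(\lambda_1(S)\cdots\lambda_n(S))^{-1/2}$.

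\emph{Step 2 (the Gaussian computation).} By Moyal's identity, $|V_g(\widehat{S}g)(z_0)|^2=\langle W(\widehat{S}g),\,W(\pi(z_0)g)\rangle$, and by the covariance of the Wigner distribution under the metaplectic and Heisenberg actions (Section~\ref{sec2}), $W(\widehat{S}g)(z)=Wg(S^{-1}z)$ and $W(\pi(z_0)g)(z)=Wg(z-z_0)$. Since $Wg$ is a positive Gaussian centered at the origin, the integral $\int_{\rnn}Wg(S^{-1}z)\,Wg(z-z_0)\,dz$ is maximal at $z_0=0$ and equals there $c_n\,(\det[(SS^{\top})^{-1}+I_{2n}])^{-1/2}$ for an absolute constant $c_n$. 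The eigenvalues of the positive definite matrix $SS^{\top}$ are the squares of the singular values of $S$, and, $S$ being symplectic, these occur in reciprocal pairs $\lambda_j^2,\lambda_j^{-2}$ ($j=1,\dots,n$); hence $\det[(SS^{\top})^{-1}+I_{2n}]=\prod_{j=1}^n(\lambda_j^2+1)(\lambda_j^{-2}+1)=\prod_{j=1}^n\lambda_j^{-2}(1+\lambda_j^2)^2$, so $\|V_g(\widehat{S}g)\|_{L^\infty}^2=c_n\prod_{j=1}^n\frac{\lambda_j}{1+\lambda_j^2}\le c_n\prod_{j=1}^n\lambda_j^{-1}$, which is \eqref{dispersiva}.

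\emph{Expected difficulties.} The one substantive input is the covariance of the Wigner distribution with respect to the metaplectic representation; granting it, Step 2 is a routine multivariate Gaussian integral plus the symplectic linear-algebra fact that singular values come in reciprocal pairs. In Step 1 the only delicate point is the (harmless) bookkeeping of the unimodular cocycle $c(z,S)$ and of the Heisenberg $2$-cocycle, which disappear under absolute values. Note that the argument in fact yields the exact value $c_n\prod_j\lambda_j/(1+\lambda_j^2)$ of $\|V_g(\widehat{S}g)\|_{L^\infty}^2$, which underlies the sharpness of the estimate; specializing $S$ to a symplectic shear recovers the classical $|t|^{-n/2}$ decay of the free Schrödinger propagator.
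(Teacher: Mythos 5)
Your argument is correct, and it takes a genuinely different route from the paper. You reduce the operator bound, via the Gabor reproducing formula and the intertwining relation $\widehat{S}\,\pi(z)\,\widehat{S}^{-1}=c(z,S)\,\pi(Sz)$ with $|c(z,S)|=1$ (the metaplectic covariance of Heisenberg--Weyl operators, see \cite{DeGosson}), to the single Gaussian matrix coefficient $\|V_g(\widehat{S}g)\|_{L^\infty}$, which you then compute exactly; the Gaussian integral, the reciprocal pairing of the singular values of $SS^T$, and the final bound $\prod_j\lambda_j(1+\lambda_j^2)^{-1}\le\prod_j\lambda_j^{-1}$ all check out, and the constants are manifestly uniform in $S$. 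The paper instead argues by duality: Moyal's identity and the Wigner covariance \eqref{covarianza} turn $|\langle\widehat{S}\psi,\varphi\rangle|^2$ into $\langle W\psi(S^{-1}\cdot),W\varphi\rangle$, then the symplectic polar/singular-value decomposition $S^{-1}=U_2^TDU_2U_1$, the mixed sup--$L^1$ estimates of Proposition \ref{pro0}, and, crucially, the uniform $M^1$-boundedness \eqref{3.2} of the operators $\widehat{U}$ over the compact subgroup $U(2n,\R)$, proved by a Schwartz-seminorm compactness argument. What your approach buys: it avoids Proposition \ref{pro0} and the uniform-boundedness lemma entirely, and it yields the exact value $c_n\prod_j\lambda_j/(1+\lambda_j^2)$, so sharpness comes for free (the paper obtains it separately in Section \ref{sec4}, quoting \cite{cordero2}, whose method is in fact close in spirit to yours). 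What the paper's approach buys: it works for arbitrary windows and general $\psi,\varphi\in M^1$ without ever invoking the cocycle of time-frequency shifts (only \eqref{covarianza} is recorded in the paper --- though you could sidestep the cocycle yourself by applying Moyal and \eqref{covarianza} directly to $|\langle\widehat{S}\pi(z)g,\pi(w)g\rangle|^2$), and it isolates the uniform $U(2n,\R)$-bound on $M^1$, which the authors single out in Section \ref{sec4} as a structural reason for choosing modulation spaces. Two small points of hygiene: your $\pi(z)$ for time-frequency shifts clashes with the paper's use of $\pi$ for the covering projection $Mp(n,\R)\to Sp(n,\R)$, so rename one of them; and the interchange of $\widehat{S}$ with the Gabor expansion should be justified (e.g.\ as an $L^2$-valued Bochner integral, using $V_g\psi\in L^1(\rnn)$ for $\psi\in M^1$), which is routine but worth a line.
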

The result is sharp as far as the decay is concerned (see Section \ref{sec4}).\par
As a consequence we can obtain the following estimates on matrix coefficients.
\begin{corollary}[Uniform weak-type estimate for matrix coefficients]\label{mainthm2} Let $G=Mp(n,\R)$ with the Haar measure. The following estimate holds: 
\begin{equation}\label{coeff}
\|\langle \widehat{S}\varphi_1,\varphi_2\rangle\|_{L^{4n,\infty}(G)}\lesssim\|\varphi_1\|_{M^1}\|\varphi_2\|_{M^1},
\end{equation}
for 
$\varphi_1,\varphi_2\in\mathcal{S}(\rn)$. 
\end{corollary}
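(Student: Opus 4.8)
The plan is to reduce the statement, via the dispersive estimate of Theorem~\ref{mainthm} and a duality argument, to an elementary measure estimate on the symplectic group. Since $\varphi_1,\varphi_2\in\mathcal S(\rn)\subset M^1$ and $\widehat S\varphi_1\in M^\infty$ by Theorem~\ref{mainthm}, I would invoke the duality $(M^1)'=M^\infty$ (with the pairing extending the $L^2$ inner product) to obtain, for every $\widehat S\in Mp(n,\R)$,
\[
|\langle\widehat S\varphi_1,\varphi_2\rangle|\ \le\ \|\widehat S\varphi_1\|_{M^\infty}\,\|\varphi_2\|_{M^1}\ \lesssim\ (\lambda_1(S)\cdots\lambda_n(S))^{-1/2}\,\|\varphi_1\|_{M^1}\,\|\varphi_2\|_{M^1},
\]
with $S=\pi(\widehat S)\in Sp(n,\R)$. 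The weight $F(S):=(\lambda_1(S)\cdots\lambda_n(S))^{-1/2}$ depends only on $S$, the scalar $\|\varphi_1\|_{M^1}\|\varphi_2\|_{M^1}$ factors out of the $L^{4n,\infty}$ quasinorm, and the $2$-to-$1$ covering $\pi\colon Mp(n,\R)\to Sp(n,\R)$ pushes the Haar measure of $G$ to a constant multiple of that of $Sp(n,\R)$. Hence \eqref{coeff} reduces to showing $F\in L^{4n,\infty}(Sp(n,\R))$.

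\emph{Cartan decomposition.} I would evaluate the distribution function of $F$ in $KAK$ coordinates $S=k_1\,e^{H}k_2$, with $k_1,k_2\in K=Sp(n,\R)\cap O(2n)$ and $H=\mathrm{diag}(h_1,\dots,h_n,-h_1,\dots,-h_n)$, $h_1\ge\cdots\ge h_n\ge0$. The singular values of $S$ that are $\ge1$ are $e^{h_1}\ge\cdots\ge e^{h_n}$, so $F(S)=e^{-(h_1+\cdots+h_n)/2}\le1$, and the Haar measure factors as $dS=c\,\delta(H)\,dk_1\,dH\,dk_2$ with, the restricted root system of $Sp(n,\R)$ being of type $C_n$ with all multiplicities $1$,
\[
\delta(H)=\prod_{i<j}\sinh(h_i-h_j)\sinh(h_i+h_j)\prod_{i=1}^n\sinh(2h_i)\ \le\ C\,e^{2\rho(H)},\qquad 2\rho(H)=\sum_{k=1}^n(2n-2k+2)\,h_k.
\]

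\emph{The measure estimate.} For $s\ge1$ the set $\{F>s\}$ is empty. For $0<s<1$ put $\tau:=-2\log s>0$; then $\{F>s\}$ corresponds to $\{h_1+\cdots+h_n<\tau\}$, so
\[
|\{F>s\}|=c\!\int_{\substack{h_1\ge\cdots\ge h_n\ge0\\ h_1+\cdots+h_n<\tau}}\!\!\delta(H)\,dH\ \le\ C\!\int_{\substack{h_1\ge\cdots\ge h_n\ge0\\ h_1+\cdots+h_n<\tau}}\!\!\exp\Big(\sum_{k=1}^n(2n-2k+2)h_k\Big)dH.
\]
Integrating out $h_1$ first — over an interval with upper endpoint $\tau-(h_2+\cdots+h_n)$ — and using that its coefficient $2n$ is the largest among the $2n-2k+2$, the inner integral is $\lesssim e^{2n\tau}e^{-2n(h_2+\cdots+h_n)}$; the residual integral in $h_2,\dots,h_n$ then converges, since each residual coefficient equals $(2n-2k+2)-2n=-(2k-2)<0$ for $k\ge2$. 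This gives $|\{F>s\}|\le C' e^{2n\tau}=C' s^{-4n}$, hence $\sup_{s>0}s^{4n}\,|\{F>s\}|<\infty$, i.e.\ $F\in L^{4n,\infty}(Sp(n,\R))$; together with the first paragraph this yields \eqref{coeff}.

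\emph{Main obstacle.} The whole content sits in the last step, and the delicate point is to avoid a logarithmic loss: the crude bound ``$|\{F>s\}|\le(\text{volume of the slab }\{\sum h_i<\tau\})\times\sup e^{2\rho(H)}$'' costs a spurious factor $\tau\sim\log(1/s)$, which is fatal at the endpoint exponent $4n$; one must use the exponential decay produced by integrating out $h_1$ to absorb it and land exactly on $4n$. Obtaining the correct density $\delta(H)$ — equivalently, computing the restricted roots of $Sp(n,\R)$ and the half-sum $\rho$ — is classical but also needs care, as the exponent $4n$ depends on it.
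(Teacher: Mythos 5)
Your proposal is correct and follows essentially the same route as the paper: duality plus the dispersive estimate reduce \eqref{coeff} to showing $(\lambda_1(S)\cdots\lambda_n(S))^{-1/2}\in L^{4n,\infty}(Sp(n,\R))$, which is then established via the $KAK$ integration formula, the bound $\delta(H)\lesssim e^{2\rho(H)}$, and integration over the truncated simplex to land exactly on $s^{-4n}$ (your formula matches the paper's \eqref{integrale} under $t_i=2h_i$). The only differences are cosmetic: the paper proves the measure estimate as a general weak-type proposition ($h=(\lambda_1\cdots\lambda_n)^{-\alpha}\in L^{\beta,\infty}$ whenever $\alpha\beta\ge 2n$), which it reuses in the Strichartz proof, and it iterates the integration starting from the smallest variable rather than integrating the largest-coefficient variable first, but the mechanism for avoiding the logarithmic loss is the same.
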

Here $L^{4n,\infty}$ is the weak-type $L^{4n}$ space on $G=Mp(n,\mathbb{R})$. \par
Corollary \ref{mainthm2} refines a result by Howe \cite{howe}, who proved that for fixed $\varphi_1,\varphi_2\in \mathcal{S}(\mathbb{R}^n)$ the matrix coefficient in \eqref{coeff} is in $L^{4n+\epsilon}$ for every $\epsilon>0$ but in general not in $L^{4n}$. In fact, estimates for matrix coefficients have a long tradition in representation theory, see for example \cite{cowling2,cowling,howe,howe2,oh} and the references therein. Usually, dealing with a unitary representation of a group $G$ in a Hilbert space $H$, one takes $\varphi_1,\varphi_2$ in $K$-invariant finite dimensional subspaces of $H$, $K\subset G$ being a maximal compact subgroup, and the constants in the estimates will depend on the dimension of such subspaces. Sometimes this finiteness condition is replaced by taking $\varphi_1,\varphi_2$ in higher order Sobolev-type spaces, and often an $\epsilon$-loss in the decay appears, as above (see e.g.\ \cite{moore}). On the contrary, in \eqref{coeff} we have the low regularity space $M^1$, and functions in $M^1$ do not need to have any differentiability, even in a fractional sense.
\par
Weak-type estimates for matrix coefficients such as \eqref{coeff} seem of great interest in their own right; for example, they could play a key role in extending Cowling's strengthened version of the Kunze-Stein phenomenon \cite{cowling3} to groups of rank higher than 1.\par

As a consequence of the dispersive estimates we therefore obtain the following Strichartz-type estimates. 
\begin{theorem}[Strichartz estimates]\label{mainthm3} Let $G=Mp(n,\R)$ with the Haar measure. The following estimates hold: 
\[
\|\widehat{S}\psi\|_{L^q(G;M^r)}\lesssim\|\psi\|_{L^2},
\] for 
\[
\frac{4n}{q}+\frac{1}{r}\leq \frac{1}{2},\quad 2\leq q,r\leq \infty.
\]
\end{theorem}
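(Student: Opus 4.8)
The plan is to run the $TT^{*}$ / Hardy--Littlewood--Sobolev argument familiar from the theory of Strichartz estimates on $\R$, transplanted to the group $G=Mp(n,\R)$ and with Lebesgue spaces on the physical side replaced by modulation spaces. For $g\in G$ write $\widehat{S}_{g}$ for the corresponding metaplectic operator, so that $g\mapsto\widehat{S}_{g}$ is a unitary representation and $\widehat{S}_{g}\widehat{S}_{h}^{-1}=\widehat{S}_{gh^{-1}}$, and set $T\psi=(\widehat{S}_{g}\psi)_{g\in G}$; the assertion is then $\|T\|_{L^{2}(\rn)\to L^{q}(G;M^{r})}<\infty$. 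Since $Mp(n,\R)$, being the double cover of the semisimple group $Sp(n,\R)$, is unimodular, the formal adjoint is $T^{*}F=\int_{G}\widehat{S}_{g}^{-1}F(g)\,dg$, whence
\[
TT^{*}F(g)=\int_{G}\widehat{S}_{gh^{-1}}F(h)\,dh .
\]
By the usual duality it is enough to prove $\|TT^{*}F\|_{L^{q}(G;M^{r})}\lesssim\|F\|_{L^{q'}(G;M^{r'})}$ for $F$ in a dense class (say compactly supported simple functions with values in $\mathcal{S}(\rn)$); in the case $r=\infty$ one uses $M^{\infty}=(M^{1})'$ and recovers the $M^{\infty}$-valued estimate by testing $T\psi$ against such $F$ through $\langle T\psi,F\rangle=\langle\psi,T^{*}F\rangle$.

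First ingredient: a bound for $\widehat{S}_{g}$ as an operator $M^{r'}\to M^{r}$, \emph{uniform} in $g$. Set $d(S):=(\lambda_{1}(S)\cdots\lambda_{n}(S))^{-1/2}\in(0,1]$. Theorem \ref{mainthm} says $\|\widehat{S}_{g}\|_{M^{1}\to M^{\infty}}\lesssim d(\pi(g))$, while unitarity on $L^{2}=M^{2}$ gives $\|\widehat{S}_{g}\|_{M^{2}\to M^{2}}=1$. Taking $\theta=2/r$ and using complex interpolation of modulation spaces (so that $[M^{1},M^{2}]_{\theta}=M^{r'}$ and $[M^{\infty},M^{2}]_{\theta}=M^{r}$), one obtains, uniformly in $g\in G$,
\[
\|\widehat{S}_{g}\psi\|_{M^{r}}\lesssim d(\pi(g))^{\,1-2/r}\,\|\psi\|_{M^{r'}},\qquad 2\le r\le\infty .
\]
Consequently $\|TT^{*}F(g)\|_{M^{r}}\le(\Phi*u)(g)$, where $u(h)=\|F(h)\|_{M^{r'}}$, $\Phi(g)=d(\pi(g))^{1-2/r}$, and $*$ denotes convolution on $G$.

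Second ingredient: the weak-type integrability of the dispersive factor over the group, namely $g\mapsto d(\pi(g))\in L^{4n,\infty}(G)$ — this is precisely the estimate underlying Corollary \ref{mainthm2}. Since $0<d\le1$ and $d(\pi(g))$ decays at infinity on $G$, it follows that $\Phi=d(\pi(\cdot))^{1-2/r}\in L^{p,\infty}(G)$ for every $p\ge 4n/(1-2/r)$; in the regime $q<\infty$ the admissibility condition forces $r>2$, and $\Phi\in L^{q/2,\infty}(G)$ holds precisely when $\tfrac q2\bigl(1-\tfrac2r\bigr)\ge 4n$, i.e.\ when $\tfrac{4n}{q}+\tfrac1r\le\tfrac12$. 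For such $(q,r)$ one has $q\ge 8n$, so the exponents $s:=q/2$ and $q'$ lie in $(1,\infty)$ and satisfy $\tfrac1q+1=\tfrac1s+\tfrac1{q'}$; Young's inequality for Lorentz spaces on the unimodular group $G$ then yields
\[
\|\Phi*u\|_{L^{q}(G)}\lesssim\|\Phi\|_{L^{q/2,\infty}(G)}\,\|u\|_{L^{q'}(G)},
\]
i.e.\ $\|TT^{*}F\|_{L^{q}(G;M^{r})}\lesssim\|F\|_{L^{q'}(G;M^{r'})}$, as wanted. The remaining case $q=\infty$, where the constraint reads $r\ge2$, is immediate from the unitarity of $\widehat{S}_{g}$ on $L^{2}=M^{2}$ together with the continuous inclusion $M^{2}\hookrightarrow M^{r}$.

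The substantive input is thus the weak-type bound $d(\pi(\cdot))\in L^{4n,\infty}(G)$ coming from Corollary \ref{mainthm2}; everything else is the robust abstract machinery. The one point to watch is the bookkeeping at the corner $(q,r)=(8n,\infty)$: there one must check that the exponents entering the Lorentz--Young inequality stay strictly inside the open interval $(1,\infty)$ — which they do, since $8n\ge 8>2$ — so that, in contrast with the classical Euclidean Strichartz endpoint, no loss of exponent and no refined (Keel--Tao) argument is required. Complex interpolation of modulation spaces, the unimodularity of $Mp(n,\R)$, and the density and $TT^{*}$ formalities (including the care needed for the $r=\infty$ duality via $M^{\infty}=(M^{1})'$) are all standard.
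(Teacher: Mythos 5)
Your argument is correct and essentially coincides with the paper's proof: the same $TT^*$ reduction, the same interpolation of the dispersive estimate with $L^2$-unitarity to get $\|\widehat{S}\psi\|_{M^r}\lesssim (\lambda_1(S)\cdots\lambda_n(S))^{-(\frac12-\frac1r)}\|\psi\|_{M^{r'}}$, and the same weak-type Young inequality on the unimodular group $G$ with exponents $q', q/2, q$. The only cosmetic difference is that you obtain the weak-type bound for $d(\pi(\cdot))^{1-2/r}$ from the single case $d\in L^{4n,\infty}(G)$ together with $d\le 1$, whereas the paper applies Proposition \ref{pro1} directly with $\alpha=\frac12-\frac1r$, $\beta=q/2$.
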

The range of admissible pairs $(q,r)$ in Theorem \ref{mainthm3} is represented in Figure \ref{figura}, which also shows a comparison with the case of the Schr\"odinger group (as already observed, the one-parameter group $e^{it\Delta}$ is a subgroup of $Mp(n,\R)$). Notice however that the exponent $r$ refers to different function spaces; in fact we have $L^r\subset M^r$ for $2\leq r\leq\infty$, with strict inclusion when $r>2$. As one can see, the admissibility condition implies $q\geq 8n$. Also, we have a whole region of admissible pairs, and not just a segment, because the modulation spaces $M^r$ are nested, unlike the Lebesgue spaces. Let us observe that, compared with the trivial estimate for $q=\infty,r=2$, again the other admissible pairs $(q,r)$ represent a gain (loss) of time (space) decay at infinity. Instead, we do no longer have any smoothing effect, as expected: among the metaplectic operators we also meet linear changes of variables, which do not produce smoothing in any reasonable space. This is in turn related to the fact that $M^1\subset M^\infty$ in \eqref{dispersiva}. 

\par
%\begin{comment}
\begin{figure}[htb]\label{figura}
\centering
\begin{tikzpicture}[xscale=0.5,yscale=0.6]  % \picskip{4}
\draw[-stealth,thick] (0,0) -- (5.5,0) node[above right] {$1/q$};
\draw[-stealth,thick] (0,0) -- (0,4.5) node[right] {$1/r$};
%\draw[] (-2.5,-2.5) grid (4.5,4.5);
\draw [-, thick]  (0,3) node [left]{$\frac{1}{2}$} -- (4,1);
\draw [-, thick] (0,3) -- (2,0) node [below]{$\frac{1}{8n}$};
\draw [-, dashed] (4,1) -- (4,0) node [below]{$\frac{1}{2}$};
\draw [fill=gray] (0,0) -- (2,0) -- (0,3);
\draw [<-, thick] (2,2) -- (4,3.2) node [right]{Schr\"odinger equation};
\draw [-,dashed] (0,1) node[left]{$\frac{n-2}{2n}$} -- (4,1);
\draw [<-, thick] (1,0.5) -- (3.3,1.8) node [right]{metaplectic representation};
%\draw [fill=red] (0,2) rectangle (1,3);
\node at (9,4.5) {(Lebesgue/modulation space exponent)};
\end{tikzpicture}
\caption{Admissible pairs for Strichartz estimates}
\end{figure}
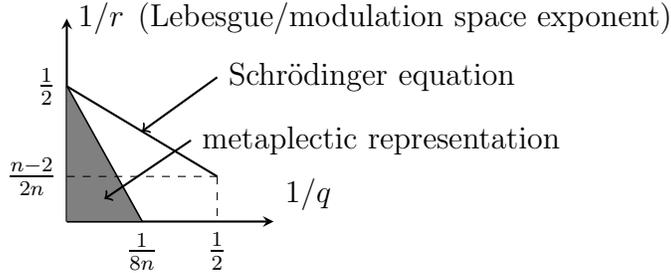
%\end{comment}
%As a consequence of the dispersive estimates we will give in Section \ref{sec4} a completely elementary proof that the metaplectic representation is topologically faithful, i.e.\ it is a homeomorphism on its range, endowed with the strong operator topology, which gives a flavor of  possible applications of this circle of ideas in the spirit of ergodic theory. Also, 

Let us observe that similar estimates seem worth investigating for other unitary representations, e.g.\  the oscillatory representation restricted to subgroups of ${Mp}(n,\R)$ (cf.\ \cite{cordero01,cordero02,cordero03,cordero04}), unitary representations of linear Lie groups such as ${SL}(n,\R)$ or more general semisimple Lie groups, where the Cartan decomposition should play the role of our singular value analysis. Part of the problem is to identify low regularity spaces strictly tailored to the given representation, playing the role of the modulation spaces used here. 
We plan to carry on this investigation in future work. \par\medskip
The paper is organized as follows. In Section \ref{sec2} we recall some preliminary results on time-frequency and symplectic methods used in the proofs of the main results. That material is mainly extracted from \cite{DeGosson}. Section \ref{sec3} is devoted to the proof of the above results. Finally in Section \ref{sec4} we collected some concluding remarks.

\section{Preliminaries}\label{sec2}
We recall here a number of definitions and results that we will use in the following. We refer to \cite{DeGosson,helgason,leray} for details. 

\subsection{Notation} We denote by $\langle\cdot,\cdot\rangle$ the inner product in $L^2(\rn)$, linear in the first argument. The notation $A\lesssim B$, for expressions $A,B\geq0$, means $A\leq C B$ for a constant $C$ depending only on the dimension $n$ and parameters which are fixed in the context. We also write $A\asymp B$ for $A\lesssim B$ and $B\lesssim A$. 

\subsection{The symplectic group}\label{sec2.1}
The symplectic group $Sp(n,\mathbb{R})$ is the group of $2n\times 2n$ real matrices $S$ such that $S^T JS=J$, where 
\[
J=\begin{pmatrix}
0& I\\
-I&0
\end{pmatrix}. 
\]
We recall that every symplectic matrix $S$ admits a unique polar decomposition 
$S=S_0U$
where $S_0$ is symplectic, symmetric and positive definite and $U$ is a symplectic rotation, i.e.\ belongs to 
\[
U(2n,\mathbb{R}):=Sp(n,\R)\cap O(2n,\R)\simeq U(n).
\]
A positive definite matrix can always be diagonalized using an orthogonal matrix. When this matrix is in addition symplectic we can use a symplectic rotation to perform this diagonalization: if $S$ is positive definite, there exists $U\in U(2n,\mathbb{R})$ such that $S=U^TDU$ where \[D={\rm diag}(\lambda_1,\ldots,\lambda_n,\lambda^{-1}_1,\ldots,\lambda^{-1}_n)\] and $\lambda_1\geq \ldots\geq \lambda_n\geq \lambda_n^{-1}\geq \ldots \geq \lambda_1^{-1}>0$ are the eigenvalues of $S$.\par
By combining polar decomposition and this diagonalization result we see that every symplectic matrix $S$ can be written as 
\[
S=U_1 D U_2
\]
with $U_1,U_2\in U(2n,\mathbb{R})$ and $D$ diagonal as above, where $\lambda_1\geq \ldots\geq \lambda_n\geq \lambda_n^{-1}\geq \ldots \geq \lambda_1^{-1}>0$ are now the singular values of $S$. 
\subsection*{Integration on the symplectic group} $Sp(n,\mathbb{R})$ turns out to be a unimodular Lie group. The following integration formula for $U(2n,\mathbb{R})$-bi-invariant functions on $Sp(n,\mathbb{R})$ will be crucial in the following. \par
Recall that $f:Sp(n,\mathbb{R})\to \mathbb{C}$ is called $U(2n,\mathbb{R})$-bi-invariant if $f(U_1 S U_2)=f(S)$ for every $S\in Sp(n,\mathbb{R})$, $U_1,U_2\in U(2n,\R)$.\par
Consider the Abelian subgroup $A=\{a_t\}$ of $Sp(n,\mathbb{R})$ given by
\[
a_t=\left(\begin{matrix}e^{\frac{t}{2}} & 0 \\ 0 & e^{-\frac{t}{2}}\end{matrix}\right),\quad t={\rm diag}(t_1,\ldots,t_n),\ (t_1,\ldots,t_n)\in\rn.
\]
 If $f$ is a $U(2n,\R)$-bi-invariant function on $Sp(n,\mathbb{R})$, its integral with respect to the Haar measure is given by 
 \begin{multline}\label{integrale}
 \int_{Sp(n,\mathbb{R})} f(S)dS\\
 =C\int_{t_1\geq\ldots \geq t_n\geq 0}f(a_t)\prod_{i<j}\sinh\frac{t_i-t_j}{2}\prod_{i\leq j}\sinh\frac{t_i+t_j}{2}\,dt_1\ldots dt_n
 \end{multline}
 for some constant $C>0$.
 \subsection*{Weak-type Young inequality on unimodular groups}
 We will also need the Young inequality for weak type spaces, which reads as follows.\par
On a measure space $X$, for $0< p<\infty$ the weak-type Lebesgue space $L^{p,\infty}(X)$ is defined as the space of measurable functions $f:X\to\mathbb{C}$ such that 
\[
\|f\|_{L^{p,\infty}}:=\sup\limits_{\lambda>0} \{\lambda \cdot ({\rm meas}\{x:\,|f(x)|\geq \lambda\})^{1/p}\}<\infty. 
\]
 Let now $G$ be a unimodular locally compact Hausdorff group. Let  \[ 1<p,q,r<\infty,\quad
 \frac{1}{p}+\frac{1}{r}=\frac{1}{q}+1.
 \]
  Then there exists a constant $C_{p,q,r}>0$ such that for all $f$ in $L^p(G)$ and $g$ in $L^{r,\infty}(G)$ we have 
 \begin{equation}\label{wyoung}
  \|f\ast g\|_{L^{q}(G)}\leq C_{p,q,r}\|g\|_{L^{r,\infty}(G)}\|f\|_{L^p(G)}.
  \end{equation}

\subsection{The metaplectic group}\label{sec2.2}
 There are many construction of the metaplectic group $Mp(n,\mathbb{R})$, i.e.\ the double covering of the symplectic group $Sp(n,\mathbb{R})$, and the metaplectic representation $Mp(n,\mathbb{R})\rightarrow \mathcal{U}(L^2(\rn))$. Since it turns out to be a faithful representation, we can in fact think of group elements as unitary operators themselves. This is the point of view of the following construction, where  $Mp(n,\mathbb{R})$ is defined as a subgroup of the unitary group $\mathcal{U}(L^2(\rn))$ and the corresponding representation is just the inclusion. The difficult point is to prove the existence of a projection 
 \[
 \pi: Mp(n,\mathbb{R})\to Sp(n,\mathbb{R})
 \]
 which makes $Mp(n,\mathbb{R})$ the double covering of $Sp(n,\mathbb{R})$. \par
 We recall here the main points of the construction, and we refer to \cite{DeGosson} and \cite{leray} for details.  \par
It can be proved that the symplectic group $Sp(n,\mathbb{R})$ is generated by the so-called free symplectic matrices \[S=\left(\begin{matrix}A & B \\ C & D\end{matrix}\right)\in Sp(n,\mathbb{R}),\quad \det B\neq 0.\] To each such matrix we associate the generating function \[W(x,x')=\frac{1}{2}DB^{-1}x\cdot x-B^{-1}x\cdot x'+\frac{1}{2}B^{-1}Ax'\cdot x'.\] Conversely, to every polynomial of the type \[W(x,x')=\frac{1}{2}Px\cdot x-Lx\cdot x'+\frac{1}{2}Qx'\cdot x'\] with \[P=P^T, Q=Q^T\] and \[\det L\neq 0\] we can associate a free symplectic matrix, namely \[S_W=\left(\begin{matrix}L^{-1}Q & L^{-1} \\ PL^{-1}Q-L^T & PL^{-1}\end{matrix}\right).\] 
Now, given $S_W$ as above and $m\in\mathbb{Z}$ such that 
\[
m\pi \equiv {\rm arg}\,\det L\quad {\rm mod}\, 2\pi,
\]
we define the operator $\widehat{S}_{W,m}$ by setting, for $\psi \in \mathcal{S}(\mathbb{R}^n)$,
\[\widehat{S}_{W,m}\psi(x)=\frac{1}{(2\pi i)^{n/2}}\Delta(W)\int_{\mathbb{R}^n}e^{iW(x,x')}\psi(x')dx'\] 
(to be clear, $(2\pi i)^{n/2}=(2\pi)^{n/2}e^{i\pi n/4}$) where
\[\Delta(W)=i^m\sqrt{|\det L|}.\]
The operator $\widehat{S}_{W,m}$ is called a {\it quadratic Fourier transform} associated to the free symplectic matrix $S_W$.
 The class modulo 4 of the integer $m$ is called {\it Maslov index} of $\widehat{S}_{W,m}$. Observe that if $m$ is one choice of Maslov index, then $m+2$ is another equally good choice: hence to each function $W$ we associate two operators, namely $\widehat{S}_{W,m}$ and $\widehat{S}_{W,m+2}=-\widehat{S}_{W,m}$.

 The quadratic Fourier transform corresponding to the choices $S_W=J$ and $m=0$ is denoted by $\widehat{J}$. The generating function of $J$ being simply $W(x,x')=-x\cdot x'$, it follows that
\[\widehat{J}\psi(x)=\frac{1}{(2\pi i)^{n/2}}\int_{\mathbb{R}^n}e^{-ix\cdot x'}\psi(x')dx'=\frac{1}{i^{n/2}}F\psi(x)
\] for $\psi\in \mathcal{S}(\mathbb{R}^n)$, where $F$ is the usual unitary Fourier transform.\par
 %defined by \[F\psi(x)=(\frac{1}{2\pi})^{n/2}\int_{\mathbb{R}^n}e^{-ix\cdot x'}\psi(x')dx'.\]
%It follows from the Fourier inversion formula that the inverse $\widehat{J}^{-1}$ of $\widehat{J}$ is given by the formula
%\[\widehat{J}^{-1}\psi(x)=\Big(\frac{i}{2\pi}\Big)^{n/2}\int_{\mathbb{R}^n}e^{ix\cdot x'}\psi(x')dx'=i^{n/2}F^{-1}\psi(x). \]
The quadratic Fourier transforms $\widehat{S}_{W,m}$ form a subset of the group $\mathcal{U}(L^2(\mathbb{R}^n))$ of unitary operators acting on $L^2(\mathbb{R}^n)$, which is closed under the operation of inversion and they generate a subgroup of $\mathcal{U}(L^2(\mathbb{R}^n))$ which is, by definition,
the metaplectic group $Mp(n,\mathbb{R})$. The elements of $Mp(n,\mathbb{R})$ are called metaplectic operators.
\par
Every $\widehat{S}\in Mp(n,\mathbb{R})$ is thus, by definition, a product \[\widehat{S}_{W_1,m_1}\ldots\widehat{S}_{W_k,m_k}\] of metaplectic operators associated to free symplectic matrices.\par
In fact, it can be proved that every $\widehat{S}\in Mp(n,\mathbb{R})$ can be written as a product of exactly two quadratic Fourier transforms: $\widehat{S}=\widehat{S}_{W,m}\widehat{S}_{W',m'}$. 
%Alternatively, $Mp(n,\mathbb{R})$ is generated by the following three type of operators: $\widehat{J}$, $\psi\longmapsto \pm \sqrt{|\det L|}\psi(Lx)$, $L\in GL(n,\mathbb{R})$ and  $\psi\longmapsto e^{\frac{i}{2}Px\cdot x}\psi(x)$, where $P$ is a real, $n\times n$ symmetric matrix.\par

Now, it can be proved that the map 
\[
\widehat{S}_{W,m}\longmapsto S_W
\]
extends to a group homomorphism 
\[
\pi: Mp(n,\mathbb{R})\to Sp(n,\mathbb{R}),
\]
which is in fact a double covering. \par
We also observe that each metaplectic operator is, by construction, a unitary operator in $L^2(\rn)$, but also an authomorphism of $\mathcal{S}(\rn)$ and of $\mathcal{S}'(\rn)$. 

\subsection{Modulation spaces}\label{sec2.3}
\par
Fix a window function $\varphi\in \mathcal{S}(\mathbb{R}^n)\setminus\{0\}$. The short-time Fourier transform (STFT) of a function/temperate distribution $\psi\in \mathcal{S'}(\mathbb{R}^n)$ with respect to $\varphi$ is defined by
\[V_\varphi \psi(x,\xi)=(2\pi)^{-n}\int_{\mathbb{R}^n}e^{-i\xi \cdot y}\psi(y)\overline{\varphi(y-x)}dy, \quad x,\xi \in \mathbb{R}^n.
\]
For $1\leq p, q\leq \infty$ and a Schwartz function $\varphi\in \mathcal{S}(\mathbb{R}^n)\setminus\{0\}$, the modulation space $M^{p,q}(\mathbb{R}^n)$ is defined as the space of $\psi\in \mathcal{S'}(\mathbb{R}^n)$ such that 
\[\|\psi\|_{M^{p,q}}:=\Big(\int_{\mathbb{R}^{n}}\Big(\int_{\mathbb{R}^{n}}|V_\varphi \psi(x,\xi)|^p dx\Big)^{q/p} d\xi \Big)^{1/q} < \infty,
\]
with obvious changes if $p=\infty$ or $q=\infty$. \par
If $p=q$, then we write $M^{p}$ instead of $M^{p,p}$.\par
We will also need a variant, sometimes called Wiener amalgam norm in the literature, defined by
\[
\|\psi\|_{W(FL^p, L^q)}:=(\int_{\mathbb{R}^{n}}\Big(\int_{\mathbb{R}^{n}}|V_\varphi \psi(x,\xi)|^p d\xi\Big)^{q/p} dx \Big)^{1/q},
\]
where the Lebesgue norms appear in the inverse order. Both these norms provide a measure of the time-frequency concentration of a function and are widely used in Time-frequency Analysis \cite{DeGosson,Grochenig}. \par
We have $M^{p_1,q_1}\subseteq M^{p_2,q_2}$ if and only if $p_1\leq p_2$ and $q_1\leq q_2$. Similarly $W(F L^{p_1},L^{q_1})\subseteq W(F L^{p_2},L^{q_2})$ if and only if $p_1\leq p_2$ and $q_1\leq q_2$.\par
The duality goes as expected: 
\[
(M^{p,q})'= M^{p',q'},\quad 1\leq p,q<\infty,
\]
and in particular 
\begin{equation}\label{dualita}
|\langle f,g\rangle|\lesssim \|f\|_{M^p}\|g\|_{M^{p'}}.
\end{equation}
In the dispersive estimates we meet, in particular, the Gelfand triple
\[
M^1\subset L^2(\rn)\subset M^\infty.
\]
 We observe that 
\[
\mathcal{S}(\rn)\subset M^1\subset L^2(\rn)
\]
with dense and strict inclusions. For atomic characterizations of the space $M^1$ we refer to \cite{DeGosson,Grochenig}. \par
We will also use the complex interpolation theory for modulation spaces, which reads as follows:
for $1\leq p,q,p_i,q_i\leq \infty$, $i=0,1$, $0\leq \theta\leq 1$, \[
\frac{1}{p}=\frac{1-\vartheta}{p_0}+\frac{\vartheta}{p_1},\quad \frac{1}{q}=\frac{1-\vartheta}{q_0}+\frac{\vartheta}{q_1},\]
 we have 
\[(M^{p_0,q_0},M^{p_1,q_1})_{\vartheta}=M^{p,q}.
\]

\subsection{The Wigner distribution}\label{sec2.4}
We now introduce a quadratic time-frequency distribution which will play a key role in the following. Again it represents a basic tool in the analysis of signals \cite{Grochenig} and in phase space Quantum Mechanics \cite{DeGosson,DeGosson2}. We refer to \cite{DeGosson,DeGosson2} for details. \par
The cross-Wigner distribution $W(\psi,\varphi)$ of functions $\psi,\varphi\in L^2(\mathbb{R}^n)$ is defined to be
\[
W(\psi,\varphi)(x,\xi)=(2\pi)^{-n}\int_{\mathbb{R}^n}e^{- i\xi \cdot y}\psi\Big(x+\frac{y}{2}\Big)\overline{\varphi\Big(x-\frac{y}{2}\Big)}dy.
\]
We also set $W\psi=W(\psi,\psi)$. \par
We recall the important {\it Moyal identity} (see e.g.\ \cite[Theorem 182]{DeGosson}):
\begin{equation}\label{moyal}
\langle W\psi,W\varphi\rangle_{L^2(\mathbb{R}^{2n})}=(2\pi)^{-n} |\langle \psi,\varphi\rangle|^2. 
\end{equation}
We will also need the following estimates.
\begin{proposition}\label{pro0}
We have
\begin{equation}\label{eq0}
\|W(\psi,\varphi)\|_{L^1(\rnn)}\lesssim \|\varphi\|_{M^1} \|\psi\|_{M^1},
\end{equation}
\begin{equation}\label{eq1}
\int_{\rn} \sup_{x\in\rn} |W(\psi,\varphi)(x,\xi)|d\xi\lesssim \|\varphi\|_{M^1}\|\psi\|_{M^{\infty,1}}
\end{equation}
and 
\begin{equation}\label{eq2}
\int_{\rn} \sup_{\xi\in\rn} |W(\psi,\varphi)(x,\xi)|dx\lesssim \|\varphi\|_{M^1}\|\psi\|_{W(F L^\infty, L^1)}.
\end{equation}
\end{proposition}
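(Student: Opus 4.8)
The plan is to reduce all three estimates to pointwise bounds on the cross-Wigner distribution expressed through short-time Fourier transforms, using the well-known identity that relates $W(\psi,\varphi)$ to $V_\varphi\psi$. Concretely, one has the formula
\[
W(\psi,\varphi)(x,\xi) = 2^n e^{2i x\cdot\xi}\, V_{\widetilde\varphi}\psi(2x,2\xi),
\]
where $\widetilde\varphi(y)=\varphi(-y)$ (up to fixed constants depending on the normalization of the STFT and of $W$ used in the paper). Since $\widetilde\varphi\in\mathcal{S}(\rn)\setminus\{0\}$ whenever $\varphi$ is, and since modulation space norms are independent of the choice of window, this identity shows that $|W(\psi,\varphi)(x,\xi)|$ is, after an affine change of variables $(x,\xi)\mapsto(2x,2\xi)$, just a constant multiple of $|V_{\widetilde\varphi}\psi|$. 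Thus each of \eqref{eq0}, \eqref{eq1}, \eqref{eq2} becomes a statement about $\|V_{\widetilde\varphi}\psi\|$ in a mixed Lebesgue norm, and the change of variables only affects the implicit constant.

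For \eqref{eq0}, after the change of variables the left-hand side is comparable to $\|V_{\widetilde\varphi}\psi\|_{L^1(\rnn)}$. The standard pointwise estimate $|V_{\widetilde\varphi}\psi(x,\xi)|\le |V_\phi\psi|\ast|V_{\widetilde\varphi}\phi|\,(x,\xi)$ for any auxiliary window $\phi\in\mathcal{S}\setminus\{0\}$ (the STFT reproducing/convolution inequality, see \cite{Grochenig}), combined with Young's inequality $\|F\ast G\|_{L^1}\le\|F\|_{L^1}\|G\|_{L^1}$, gives $\|V_{\widetilde\varphi}\psi\|_{L^1}\lesssim \|V_\phi\psi\|_{L^1}\|V_{\widetilde\varphi}\phi\|_{L^1}=\|\psi\|_{M^1}\|\widetilde\varphi\|_{M^1}\asymp\|\psi\|_{M^1}\|\varphi\|_{M^1}$, using that $\|\widetilde\varphi\|_{M^1}\asymp\|\varphi\|_{M^1}$. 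This proves \eqref{eq0}. For \eqref{eq1}, after the change of variables the left-hand side is comparable to $\int_{\rn}\sup_x |V_{\widetilde\varphi}\psi(x,\xi)|\,d\xi$. Applying the same convolution inequality and then taking the supremum in $x$ inside, one is led to estimate $\int_{\rn}\big(\sup_x|V_\phi\psi(x,\cdot)|\big)\ast_\xi\big(\int |V_{\widetilde\varphi}\phi(\cdot,\cdot)|\,dx\big)\,d\xi$ — i.e. a Young inequality in the $\xi$-variable with $L^1$ in $\xi$ on both factors, where one factor carries an $L^\infty$ in $x$ and the other an $L^1$ in $x$. This yields the bound $\|\psi\|_{M^{\infty,1}}\|\widetilde\varphi\|_{M^{1}}$ (one must be slightly careful that the window norm that appears is $\|\widetilde\varphi\|_{M^{1,1}}=\|\widetilde\varphi\|_{M^1}$, which is fine since $M^1\subset M^{1,q}$ for all $q$, but here we genuinely just need the $L^1_\xi L^1_x$ norm of $V_{\widetilde\varphi}\phi$). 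Estimate \eqref{eq2} is entirely symmetric: the same argument with the roles of $x$ and $\xi$ exchanged (or, equivalently, applying \eqref{eq1} to the Fourier transform and using that the Wiener amalgam space $W(FL^\infty,L^1)$ is the image of $M^{\infty,1}$ under the Fourier transform) produces $\|\varphi\|_{M^1}\|\psi\|_{W(FL^\infty,L^1)}$.

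The main obstacle is mostly bookkeeping rather than conceptual: one has to be careful about which mixed-norm lands on the window $\widetilde\varphi$ versus on $\psi$ when the $\sup$ is taken inside the integral, since for \eqref{eq1} and \eqref{eq2} the order in which $\sup_x$ (or $\sup_\xi$) and $\int d\xi$ (or $\int dx$) are applied is crucial and does not commute with the convolution structure. The clean way to organize this is: first pull the supremum through the convolution inequality using that $\sup_x (F\ast_{(x,\xi)} G) \le (\sup_x F)\ast_\xi (\sup_x G)$ is \emph{false} in general, so instead one should keep $\sup_x$ on the $\psi$-factor and note $\sup_x\big(|V_\phi\psi|\ast|V_{\widetilde\varphi}\phi|\big)(x,\xi) \le \int_{\rn}\big(\sup_x |V_\phi\psi(x,\cdot)|\big)(\xi-\eta)\,\big(\sup_x|V_{\widetilde\varphi}\phi(x,\eta)|\big)\,d\eta$ — which is the correct and valid inequality — and then integrate in $\xi$. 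Because $\widetilde\varphi,\phi$ are Schwartz, $\sup_x|V_{\widetilde\varphi}\phi(x,\cdot)|$ is a Schwartz function of $\xi$, so its $L^1_\xi$ norm is finite and absorbed into the implied constant; and $\int_{\rn}\sup_x|V_\phi\psi(x,\xi)|\,d\xi = \|\psi\|_{M^{\infty,1}}$ by definition. Handling this step carefully for \eqref{eq1} and its mirror image for \eqref{eq2} is the only delicate point; everything else is the standard STFT machinery.
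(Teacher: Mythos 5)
Your main line of argument is the same as the paper's: reduce $W(\psi,\varphi)$ to the STFT via the dilation identity, apply the change-of-window convolution inequality of \cite[Lemma 11.3.3]{Grochenig} together with the symmetry $|V_{\varphi}\varphi_0(x,\xi)|=|V_{\varphi_0}\varphi(-x,-\xi)|$, and conclude with Young's inequality for mixed-norm Lebesgue spaces; this is exactly how the paper proves \eqref{eq1} and \eqref{eq2} (for \eqref{eq0} the paper simply cites \cite[Proposition 3.6.5]{DeGosson}, while you reprove it via the $L^1\ast L^1$ case of Young, which is fine). Your second paragraph, in which the window factor carries the full $L^1_{x,\xi}$ norm of $V_{\widetilde\varphi}\phi\asymp\|\varphi\|_{M^1}$ and the $\psi$-factor carries the $M^{\infty,1}$ (resp.\ $W(FL^\infty,L^1)$) norm, is correct and is all that is needed.

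The final paragraph, however --- the one you single out as the delicate point --- asserts a false inequality. Bounding $F(x-u,\xi-\eta)\le\sup_{x'}F(x',\xi-\eta)$ in the convolution leaves the $u$-integration on the second factor, so the valid estimate is
\[
\sup_x\big(F\ast G\big)(x,\xi)\;\le\;\int_{\rn}\Big(\sup_{x'}F(x',\xi-\eta)\Big)\Big(\int_{\rn}G(u,\eta)\,du\Big)\,d\eta ,
\]
i.e.\ the window factor must carry $L^1$ in the first variable (this is precisely the mixed-norm Young inequality $L^{\infty,1}\ast L^{1,1}\to L^{\infty,1}$ that you already used); one cannot place a supremum on both factors, as your displayed inequality does (take $G(u,\eta)=g(u)h(\eta)$ with $g\ge0$ spread out: the left side scales like $\|g\|_{L^1}$, the right side like $\|g\|_{L^\infty}$). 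Moreover, your suggestion to absorb the $\eta$-integral of $\sup_x|V_{\widetilde\varphi}\phi|$ into the implied constant would not prove the proposition as stated: the bound must exhibit the explicit factor $\|\varphi\|_{M^1}$ with a constant independent of $\varphi$, and this is essential downstream, since in the proof of Theorem \ref{mainthm} the estimates \eqref{eq1}--\eqref{eq2} are applied with $\varphi$ replaced by the varying functions $\widehat{U}_2\psi$ and $\widehat{U}_2\widehat{U}_1\varphi$, not by a fixed window. With the corrected inequality the relevant factor is $\|V_{\widetilde\varphi}\phi\|_{L^1(\rnn)}\asymp\|\varphi\|_{M^1}$ by the symmetry relation, and the proof closes exactly as in your second paragraph and in the paper.
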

\begin{proof}
Formula \eqref{eq0} is proved in \cite[Proposition 3.6.5]{DeGosson}.\par
Let us prove \eqref{eq1} and \eqref{eq2}.
It is easy to see that
\begin{equation}\label{eq00}
|W(\psi,\varphi)(x,\xi)|=2^n|V_\varphi \psi(2x,2\xi)|
\end{equation}
so that it is sufficient to prove similar estimates with $W(\psi,\varphi)(x,\xi)$ replaced by $V_\varphi \psi(x,\xi)$. To this end we recall from \cite[Lemma 11.3.3]{Grochenig} that, for $\varphi,\varphi_0 \in \mathcal{S}(\mathbb{R}^n)$ such that $\|\varphi_0\| \neq 0$ and $\psi\in \mathcal{S'}(\mathbb{R}^n)$ we have 
\begin{equation}\label{conv}
 |V_{\varphi}\psi(x,\xi)|\lesssim \frac{1}{\|\varphi_0\|^2}(|V_{\varphi_0} \psi|*|V_{\varphi}\varphi_0 |)(x,\xi),
\end{equation}
 for all $(x,\xi)\in \mathbb{R}^{2n}$.

Now, we apply this inequality with a fixed Schwartz window $\varphi_0$ and we also observe that 
\[
|V_{\varphi}\varphi_0(x,\xi)|=|V_{\varphi_0}\varphi(-x,-p)|.
\]
The desired estimates for $V_{\varphi}\psi(x,\xi)$ then follow by applying the Young inequality for mixed-norm Lebesgue spaces in \eqref{conv}. 
\end{proof}
One of the most important property of the cross--Wigner distribution is its covariance with respect to the action of metaplectic operators. In fact we have (see e.g.\ \cite[Corollary 2.17]{DeGosson})
\begin{equation}\label{covarianza}
W(\widehat{S}\psi, \widehat{S}\varphi)(z)= W(\psi,\varphi)(S^{-1}z),\quad z\in\rn\times\rn. 
\end{equation}
for every $\widehat{S}\in Mp(n,\mathbb{R})$, with projection ${S}\in Sp(n,\mathbb{R})$.

\section{Proof of the main results}\label{sec3}
In this section we prove Theorems \ref{mainthm}, Corollary \ref{mainthm2} and Theorem \ref{mainthm3}.
\begin{proof}[Proof of Theorem \ref{mainthm}]
By duality it is equivalent to prove that \[| \langle \widehat{S}\psi,\varphi\rangle|\lesssim (\lambda_1(S)\ldots\lambda_n(S))^{-1/2}
\|\psi\|_{M^1}\|\varphi\|_{M^1}.\] By the Moyal identity \eqref{moyal} and the covariance property \eqref{covarianza}, we have 
\begin{align*}
|\langle \widehat{S}\psi,\varphi\rangle|^2&=(2\pi)^n \langle W(\widehat{S}\psi),W\varphi\rangle_{L^2(\mathbb{R}^{2n})}
\\ 
&=(2\pi)^n\langle W\psi(S^{-1}\cdot),W\varphi\rangle_{L^2(\mathbb{R}^{2n})}.
\end{align*}
We now can write $S^{-1}=S_1U_1$ with $S_1\in Sp(n,\mathbb{R})$ positive definite and $U_1\in U(2n,\mathbb{R})$. Hence, by an orthogonal change of variable we obtain
\[
|\langle \widehat{S}\psi,\varphi\rangle|^2
=(2\pi)^n \langle W\psi(S_1\cdot),W\varphi(U_1^T\cdot)\rangle_{L^2(\rnn)}.\] We now diagonalize $S_1$, $S_1=U_2^TDU_2$ (see Section \ref{sec2.1}) where
\[
D={\rm diag}(\lambda_1,\ldots,\lambda_n,\lambda^{-1}_1,\ldots,\lambda^{-1}_n)
\]
with $\lambda_1\geq \ldots\geq \lambda_n\geq \lambda_n^{-1}\geq \ldots \geq \lambda_1^{-1}>0$ and $U_2\in U(2n,\mathbb{R})$. With a further change of variable we obtain\[
|\langle \widehat{S}\psi,\varphi\rangle|^2
=(2\pi)^n\langle W\psi(U_2^TD\,\cdot),W\varphi(U_1^TU_2^T\cdot)\rangle _{L^2(\mathbb{R}^{2n})}.
\] 
Let 
\[
F_1=W\psi(U_2^T\cdot)=W(\widehat{U}_2\psi),
\] 
and
 \[
F_2=W\varphi(U_1^TU_2^T\cdot)=W(\widehat{U}_2\widehat{U}_1\varphi).
\]
 We estimate 
 \begin{align*}
 \langle &W\psi(U_2^TD\,\cdot),W\varphi(U_1^TU_2^T\cdot)\rangle_{L^2({\mathbb{R}^{2n}})}
 \\ &=
\int_{\mathbb{R}^{2n}} F_1(\lambda_1 x_1,\ldots,\lambda_n x_n,\lambda_1^{-1}\xi_1,\ldots,\lambda_n^{-1}\xi_n) \overline{F_2(x,\xi)} dxd\xi
\\
&\leq \int_{\mathbb{R}^{2n}} \sup_{\xi\in \mathbb{R}^n}|F_1(\lambda_1x_1,\ldots,\lambda_n x_n,\xi_1,\ldots,\xi_n)|\sup_{x\in \mathbb{R}^n}|F_2(x,\xi)|dxd\xi
\\
&=\lambda_1^{-1}\ldots \lambda_n^{-1} \int_{\mathbb{R}^{n}} \sup_{\xi\in \mathbb{R}^n}|F_1(x,\xi)|dx \int_{\mathbb{R}^n}\sup_{x\in \mathbb{R}^n}|F_2(x,\xi)|d\xi\leq
\\
 &\lesssim \lambda_1^{-1}\ldots \lambda_n^{-1}\|\widehat{U}_2\psi\|_{M^1} \|\widehat{U}_2\psi\|_{ W(\mathcal{F}L^\infty,L^1)   } \|\widehat{U}_2\widehat{U}_1\varphi\|_{M^1}\|\widehat{U}_2\widehat{U}_1\varphi\|_{M^{\infty,1} }
 ,
  \end{align*} where we used, in the last line, Proposition \ref{pro0}.\par
  Using the inclusions 
  \[
  M^1=M^{1,1}\hookrightarrow M^{\infty,1},\quad M^1=W(\mathcal{F}L^1,L^1) \hookrightarrow W(\mathcal{F}L^\infty,L^1)\]
   we continue the above estimate as 
   \[\lesssim \lambda_1^{-1}\ldots \lambda_n^{-1}\|\widehat{U}_2\psi\|_{M^1}^2\|\widehat{U}_2\widehat{U}_1\varphi\|_{M^1}^2.
   \]
It is then sufficient to show that
\begin{equation}\label{3.2} \|\widehat{U}_2\psi\|_{M^1}\leq C\|\psi\|_{M^1}
\end{equation}
and
\begin{equation}\label{3.1}\|\widehat{U}_2\widehat{U}_1\varphi\|_{M^1}\leq C \|\varphi\|_{M^1}
\end{equation} for a constant $C>0$ independent of $\widehat{U}_1$, $\widehat{U}_2$.\par
Let us verify \eqref{3.2}, which implies \eqref{3.1} too.
 By the definition of the $M^1$ norm and \eqref{eq00} we have \[\|\widehat{U}_2\psi\|_{M^1}\asymp \|W(\widehat{U}_2\psi,\varphi)\|_{L^1(\rnn)}\] for some fixed $\varphi\in \mathcal{S}(\mathbb{R}^n)\setminus\{0\}$, which by covariance is equal to 
 \[\|W(\psi,\widehat{U}_2^{-1}\varphi)(U_2^{-1}\cdot)\|_{L^1(\rnn)}=\|W(\psi,\widehat{U}_2^{-1}\varphi)\|_{L^1(\rnn)}.\] 
Hence, using \eqref{eq0} and the continuous embedding $\mathcal{S}(\mathbb{R}^n)\hookrightarrow M^1$ it is sufficient to prove that if $\varphi\in \mathcal{S}(\mathbb{R}^n)$ then \[\{\hat{U}\varphi:\,U\in U(2n,\mathbb{R})\}\] is a bounded subset of $\mathcal{S}(\mathbb{R}^n)$, that is,  every Schwartz seminorm is bounded on it. Since $U(2n,\mathbb{R})$ is compact it is sufficient to show that every seminorm is locally bounded, i.e.\ we can limit ourselves to take $U$ in a sufficiently small neighborhood for any fixed $U_0\in U(2n,\mathbb{R})$. Equivalently, we can consider $U$ of the form $U=U_1J^{-1}U_0$ where $U_1$ belongs to a sufficiently small neighborhood of $J$ in $U(2n,\R)$. Now, 
\begin{align*}
\hat{U}\varphi(x)&=\pm \widehat{U}_1[\widehat{J}^{-1}\widehat{U}_0\varphi](x)\\
&=c \sqrt{|\det L|}\int_{\mathbb{R}^n}e^{\frac{i}{2}Px\cdot x-iLx\cdot y+\frac{i}{2}Qy\cdot y}[\underbrace{\widehat{J}^{-1}\widehat{U}_0\varphi]}_{\in \mathcal{S}(\mathbb{R}^n)}(y)dy
\end{align*}
 where $|c|=1$ and, say, $\|P\|<\epsilon$, $\|Q\|<\epsilon$, $\|L-I\|<\epsilon$. If $\epsilon<1$, it is clear that $\hat{U}\varphi$ belongs to a bounded subset of $\mathcal{S}(\mathbb{R}^n)$, as one can verify by direct inspection.
\end{proof}
%\begin{remark} Let us notice that \eqref{3.2} could also be deduced by the uniform boundedness principle; to this end we should check that the set $\{\hat{U}\psi:\,U\in U(2n,\mathbb{R})\}$ is a bounded subset of $M^1$, for a fixed $\psi\in M^1(\mathbb{R}^n)$, which is essentially what we did in the last part of the proof of Theorem \ref{mainthm}. \end{remark}
In order to prove Corollary \ref{mainthm2} we need the following preliminary result. 
\begin{proposition}\label{pro1}
Let $\alpha>0$, $\beta>0$. Consider the function 
\[
h(S)=(\lambda_1(S)\ldots\lambda_n(S))^{-\alpha}
\]
on $Sp(n,\R)$, where $\lambda_1(S)\ldots\lambda_n(S)$ are the singular values $\geq1$ of the symplectic matrix $S$.\par
 We have $h\in L^{\beta,\infty}$ on $Sp(n,\R)$, with respect to the Haar measure, if
\[
 \alpha\beta\geq 2n.
 \]
\end{proposition}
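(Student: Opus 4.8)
The plan is to reduce the $L^{\beta,\infty}$ membership of $h$ to a direct estimate of the Haar measure of the super-level sets $\{S : h(S) \geq \lambda\}$, and to compute that measure using the integration formula \eqref{integrale} for $U(2n,\mathbb{R})$-bi-invariant functions. Observe first that $h$ is indeed $U(2n,\mathbb{R})$-bi-invariant, since the singular values of $S$ are unchanged under $S \mapsto U_1 S U_2$. In terms of the coordinates $t = (t_1,\dots,t_n)$ parametrizing the subgroup $A = \{a_t\}$ with $t_1 \geq \dots \geq t_n \geq 0$, the singular values $\geq 1$ of $a_t$ are $e^{t_1/2},\dots,e^{t_n/2}$, so that $h(a_t) = e^{-\frac{\alpha}{2}(t_1+\dots+t_n)}$. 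Thus the condition $h(S) \geq \lambda$ with $0 < \lambda < 1$ corresponds to $t_1 + \dots + t_n \leq \frac{2}{\alpha}\log(1/\lambda) =: R$ (for $\lambda \geq 1$ the set is empty since all $t_i \geq 0$ forces $h \leq 1$, but actually $h \le 1$ so only $\lambda \le 1$ matters).

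Next I would estimate, via \eqref{integrale},
\[
\mathrm{meas}\{S : h(S) \geq \lambda\} = C \int_{\substack{t_1 \geq \dots \geq t_n \geq 0 \\ t_1 + \dots + t_n \leq R}} \prod_{i<j}\sinh\frac{t_i - t_j}{2} \prod_{i \leq j}\sinh\frac{t_i+t_j}{2}\, dt.
\]
The key point is to bound the density. Using $\sinh(s) \leq \frac12 e^{s}$ for $s \geq 0$, each factor $\sinh\frac{t_i-t_j}{2} \lesssim e^{(t_i-t_j)/2}$ and $\sinh\frac{t_i+t_j}{2} \lesssim e^{(t_i+t_j)/2}$, so the whole product is $\lesssim e^{\sigma(t)/2}$ where $\sigma(t) = \sum_{i<j}(t_i - t_j) + \sum_{i \leq j}(t_i+t_j)$. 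A short bookkeeping computation shows $\sigma(t) = \sum_i (2n - 2i + 2) t_i$ — in particular $\sigma(t) \leq 2n\sum_i t_i$ on the relevant region — hence the density is $\lesssim e^{n(t_1+\dots+t_n)} \leq e^{nR} = \lambda^{-2n/\alpha}$ on the set where $t_1+\dots+t_n \leq R$. Combining this pointwise bound with the Euclidean volume of the simplex $\{t_i \geq 0,\ \sum t_i \leq R\}$, which is $R^n/n!$, gives
\[
\mathrm{meas}\{S : h(S) \geq \lambda\} \lesssim R^n\, \lambda^{-2n/\alpha} \asymp \big(\log(1/\lambda)\big)^n\, \lambda^{-2n/\alpha}.
\]
Therefore $\lambda \cdot \big(\mathrm{meas}\{h \geq \lambda\}\big)^{1/\beta} \lesssim \lambda^{1 - 2n/(\alpha\beta)} (\log(1/\lambda))^{n/\beta}$, and since we assume $\alpha\beta \geq 2n$, the exponent $1 - 2n/(\alpha\beta) \geq 0$; if it is strictly positive the logarithmic factor is absorbed and the quantity stays bounded as $\lambda \to 0^+$, while if $\alpha\beta = 2n$ exactly one should instead keep the sharper simplex bound (or simply note that the borderline case can be obtained by the inclusion $L^{\beta',\infty} \subset L^{\beta,\infty}$ on... no) — here I expect the cleanest route is to retain $\mathrm{meas}\{h\ge\lambda\} \lesssim \lambda^{-2n/\alpha}(\log(1/\lambda))^n$ and observe that when $\alpha\beta = 2n$ the product $\lambda(\log(1/\lambda))^{n/\beta}$ is \emph{not} bounded, so in fact one wants the strict-inequality density bound to be improved: on the simplex the volume is $R^n/n!$ and for $\lambda$ bounded away from $0$ there is nothing to prove, while for $\lambda \to 0$ with $\alpha\beta>2n$ we are fine, and the equality case $\alpha\beta=2n$ follows by a separate limiting argument or by noting $h \in L^{\beta,\infty}$ reduces to $\sup_{\lambda\in(0,1)}$ of a bounded function.

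The main obstacle is the careful treatment of the borderline exponent $\alpha\beta = 2n$, where the polynomial-in-$\log$ volume factor of the simplex competes with the power of $\lambda$: a crude application of $\sinh \lesssim e^{(\cdot)}$ loses exactly this logarithmic margin. To handle it cleanly I would either (i) sharpen the density estimate near the diagonal $\{t_i = t_j\}$ and near $\{t_n = 0\}$, where $\sinh\frac{t_i-t_j}{2}$ and $\sinh\frac{t_i+t_j}{2}$ are small, to gain the needed decay, or (ii) more simply, prove the estimate first for $\alpha\beta > 2n$ (where the argument above works verbatim) and then recover the endpoint by testing against the explicit form of $h$: since $h(a_t)$ depends only on $\sum t_i$ and the weight grows like $e^{2n\sum t_i/2}\cdot(\text{lower order})$, one finds $\mathrm{meas}\{h \geq \lambda\} \asymp \lambda^{-2n/\alpha}$ up to the logarithmic factor, and the sharp statement is exactly that $h \in L^{\beta,\infty}$ iff $\alpha\beta \geq 2n$, the $\log$ being killed by the weak-type (rather than strong-type) norm only when the inequality is strict — so for the stated Proposition (an "if", not "iff") the safest and shortest writeup keeps track of the simplex volume and simply notes that $\lambda^{1-2n/(\alpha\beta)}(\log 1/\lambda)^{n/\beta}$ is bounded on $(0,1)$ precisely when $1 - 2n/(\alpha\beta) > 0$, and treats $\alpha\beta = 2n$ by the improved near-boundary density bound in (i).
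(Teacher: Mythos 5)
Your setup coincides with the paper's: bi-invariance of $h$, the integration formula \eqref{integrale}, the identification $h(a_t)=e^{-\alpha(t_1+\cdots+t_n)/2}$, the reduction to the region $t_1+\cdots+t_n\leq A_\lambda:=\tfrac{2}{\alpha}\log(1/\lambda)$, and the bound of the $\sinh$-density by $e^{\sum_i(n-i+1)t_i}$ are all exactly what the paper does. The gap is in the last step: you bound the density by its supremum $e^{nA_\lambda}=\lambda^{-2n/\alpha}$ and multiply by the Euclidean volume $A_\lambda^n/n!$ of the simplex, which produces the spurious factor $(\log(1/\lambda))^{n}$. As you yourself observe, this kills precisely the endpoint $\alpha\beta=2n$, and that endpoint is not a decorative borderline: it is the case actually used in the paper (Corollary \ref{mainthm2} takes $\alpha=1/2$, $\beta=4n$, i.e.\ $\alpha\beta=2n$, and the boundary of the admissible region in Theorem \ref{mainthm3} also lands there). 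Neither of your two proposed repairs closes the gap: option (i) (sharpening the density near $\{t_i=t_j\}$ and $\{t_n=0\}$) is only announced, not carried out, and option (ii) is not a proof — there is no limiting argument from $\alpha\beta>2n$ to $\alpha\beta=2n$ (the weak norms do not pass to the limit in $\beta$), and the weak-type norm does not ``kill'' a genuine logarithmic factor at the endpoint; your own display $\lambda^{1-2n/(\alpha\beta)}(\log(1/\lambda))^{n/\beta}$ shows it is unbounded there.

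The fix is short and is what the paper does: do not replace the exponential density by its sup; integrate it over the simplex iteratively. First integrate in $t_n$, using
\[
\int_0^{A_\lambda-t_1-\cdots-t_{n-1}} e^{t_n}\,dt_n\;\leq\; e^{A_\lambda}\,e^{-t_1-\cdots-t_{n-1}},
\]
which, multiplied by the remaining weight $e^{2t_{n-1}}e^{3t_{n-2}}\cdots e^{nt_1}$, leaves $e^{A_\lambda}\,e^{t_{n-1}}e^{2t_{n-2}}\cdots e^{(n-1)t_1}$ on the smaller simplex $t_1+\cdots+t_{n-1}\leq A_\lambda$; repeating $n$ times gives
\[
{\rm meas}\,\{h\geq\lambda\}\;\leq\; C\,e^{nA_\lambda}\;=\;C\,\lambda^{-2n/\alpha},\qquad 0<\lambda\leq 1,
\]
with no logarithm. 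Then $\lambda\,({\rm meas}\{h\geq\lambda\})^{1/\beta}\lesssim \lambda^{1-2n/(\alpha\beta)}\leq 1$ for all $0<\lambda\leq 1$ whenever $\alpha\beta\geq 2n$, which proves the Proposition including the endpoint. In other words, the exponential growth of the density means the integral over the simplex is comparable to the value at the extreme corner, not to (sup)$\times$(volume); exploiting this is the one idea your write-up is missing.
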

\begin{proof}
We have to estimate the measure of the set 
\[
\mathcal{D}_\lambda=\{S\in Sp(n,\R):\ h(S)\geq \lambda\}, \quad \lambda>0
\]
or equivalently 
\[
\int_{Sp(n,\R)} \chi_{\mathcal{D}_\lambda} dS,
\]
where $\chi_{\mathcal{D}_\lambda}$ is the indicator function of $\mathcal{D}_\lambda$. Observe that $\mathcal{D}_\lambda=\emptyset$ if $\lambda>1$ so that we can suppose $0<\lambda\leq 1$. \par
We use formula \eqref{integrale} with $f=\chi_{\mathcal{D}_\lambda}$ since $h$, and therefore $f$, is $U(2n,\R)$-bi-invariant. With the notation in \eqref{integrale} we have 
\[
h(a_t)=e^{-\alpha(t_1+\ldots +t_n)/2},
\]
where $t=(t_1,\ldots,t_n)$ and $t_1, \ldots, t_n\geq0$. Hence $h(a_t)\geq\lambda$ if and only if 
\[
t_1+t_2+\ldots+t_n\leq A_\lambda:=-2\log \lambda/\alpha.
\]
By \eqref{integrale},
\[
{\rm meas}\,\mathcal{D}_\lambda=C
\int_{t_1\geq t_2\geq...\geq t_n\geq 0\atop t_1+...+t_n\leq A_\lambda} \prod_{i<j} \sinh\frac{t_i-t_j}{2}\, \prod_{i\leq j} \sinh\frac{t_i+t_j}{2}\, dt_n\ldots dt_1.\] 

Now we have
\begin{align*}
\prod_{i<j} \sinh &\frac{t_i-t_j}{2}\, \prod_{i\leq j} \sinh\frac{t_i+t_j}{2}\\
&\leq \exp\Big({\sum_{i<j}\Big(\frac{t_i-t_j}{2}+\frac{t_i+t_j}{2}\Big)+t_1+...+t_n}\Big)\\
&=\exp\big({(n-1)t_1+(n-2)t_2+...+t_{n-1}+t_1+...+t_n}\big)\\
&=e^{t_n}e^{2t_{n-1}}e^{3t_{n-2}}\ldots e^{nt_1}.
\end{align*}
By first integrating with respect to the variable $t_n$ from $t_n=0$ to $t_n=A_\lambda-t_{n-1}-\ldots-t_1$, we obtain
\[
{\rm meas}\,\mathcal{D}_\lambda\leq C \int_{t_1\geq t_2\geq...\geq t_{n-1}\geq 0\atop t_{n-1}\leq A_\lambda-t_{n-2}-...-t_1}e^{A_\lambda}e^{t_{n-1}}\ldots e^{(n-1)t_1}\, dt_{n-1}\ldots dt_1.
\]
Now we can repeat the same argument for $t_{n-1}$ and so on. We obtain
\[
{\rm meas}\,\mathcal{D}_\lambda\leq Ce^{nA_\lambda}=C \lambda^{-2n/\alpha},\quad 0<\lambda\leq 1.
\]
Hence 
${\rm meas}\,\mathcal{D}_\lambda\leq C'\lambda^{-\beta}$ if $2n/\alpha\leq \beta$, which is the desired result. 
\end{proof}

\begin{proof}[Proof of Corollary \ref{mainthm2}]
Using \eqref{dualita} and \eqref{dispersiva} we have 
 \[|\langle \widehat{S}\varphi_1,\varphi_2\rangle|\lesssim (\lambda_1(S)\ldots\lambda_n(S))^{-1/2}\|\varphi_1\|_{M^1}\|\varphi_2\|_{M^1}.
 \]
 Hence it is sufficient to prove that the function 
 \[
 \widehat{S}\longmapsto (\lambda_1(S)\ldots\lambda_n(S))^{-1/2}
 \]
 is in $L^{4n,\infty}$ on $Mp(n,\R)$ with respect to the Haar measure. Since this function factorizes through $Sp(n,\R)$ it is enough to prove that the function 
 \[
 h(S):= (\lambda_1(S)\ldots\lambda_n(S))^{-1/2}
 \]
 is in $L^{4n,\infty}$ on $Sp(n,\R)$. This follows from Proposition \ref{pro1} with $\alpha=1/2$ and $\beta=4n$.  
\end{proof}

We are now ready to prove the Strichartz estimates for the metaplectic representation.  
\begin{proof}[Proof of Theorem \ref{mainthm3}]
We know that 
\begin{equation}\label{elle2}
\|\widehat{S}\psi\|_{L^2}=\|\psi\|_{L^2}
\end{equation} 
for $\psi\in L^2(\rn)$, which gives the desired Strichartz estimate for 
$q=\infty,\ r=2$, because $M^2=L^2$, and also for $q=\infty,\ 2\leq r\leq\infty$, because $L^2\hookrightarrow M^r$ for $r\geq 2$. Hence from now on we can suppose 
$
q<\infty.
$
\par
Now by Theorem \ref{mainthm}, 
\[\|\widehat{S}\psi\|_{M^\infty}\lesssim (\lambda_1(S)\ldots\lambda_n(S))^{-1/2}\|\psi\|_{M^1}.\]
By interpolation with \eqref{elle2} we obtain, for every $2\leq r\leq\infty$,
\begin{equation}\label{interp}
\|\widehat{S}\psi \|_{M^r}\lesssim(\lambda_1(S) \ldots\lambda_n(S))^{-(\frac{1}{2}-\frac{1}{r})}\|\psi\|_{M^{r'}}.
\end{equation}
Let now $G=Mp(n,\R)$, as in the statement. We apply the usual $ TT^\ast$ method (see \cite[page 75]{Tao}) to the operator $T\psi=\widehat{S}\psi$. To prove that \[
T:L^2\to L^q(G;M^r)\]
 continuously, we will verify that \[
TT^\ast :L^{q'}(G;M^{r'})\to L^q(G;M^r)
\]
 continuously. \par
 We have 

\[T^*F(\cdot)=\int_G\widehat{S}^{-1}F(\widehat{S},\cdot)d\widehat{S}
\]
if $F(\widehat{S},x)$ is, say, a continuos function on $G\times \rn$ with compact support. 

Hence
\[[TT^*F](\widehat{S},\cdot)=\int_G \widehat{S}\widehat{S'}^{-1}F(\widehat{S'},\cdot)d\widehat{S'}.
\]
 Now, using \eqref{interp} we can estimate this expression, for every $2\leq r\leq\infty$, $1\leq q\leq\infty$, as
\begin{align}
\|TT^*F\|_{L^q(G;M^r)}&\leq \|\int_G \|\widehat{S}\widehat{S'}^{-1}F(\widehat{S'},\cdot)\|_{M^r}d\widehat{S'}\|_{L^q(G)}\nonumber\\
&\leq \|\int_G (h \circ \pi)(\widehat{S}\widehat{S'}^{-1})\|F(\widehat{S'},\cdot)\|_{M^{r'}}d\widehat{S'}\|_{L^q(G)},\label{ultima}
\end{align}
where we set
\[
h(S)=(\lambda_1(S) \ldots\lambda_n(S))^{-(\frac{1}{2}-\frac{1}{r})}
\]
as a function on $Sp(n,\R)$ and $\pi:G=Mp(n,\R)\to Sp(n,\R)$ is the projection. \par
Now suppose that the pair $(q,r)$ satisfies $2\leq q,r\leq\infty$ and $\frac{4n}{q}+\frac{1}{r}\leq \frac{1}{2}$; see Figure \ref{figura}. Observe that this implies $q>2$ and we are also supposing $q<\infty$, which implies $r>2$.  Choose \[
\alpha=\frac{1}{2}-\frac{1}{r},\quad \frac{1}{\beta}= \frac{2}{q}.
\]
 We see that $\alpha,\beta>0$ and $\alpha\beta\geq 2n$ so that by Proposition \ref{pro1} we have $h\in L^{\beta,\infty}$ in $Sp(n,\R)$ and $h \circ \pi\in L^{\beta,\infty}$ on $G$. We moreover have 
\[
\frac{1}{q}+1=\frac{1}{q'}+\frac{1}{\beta},\quad 1<q,q',\beta<\infty.
\]
Hence we can apply the weak-type Young inequality \eqref{wyoung} on $G$ to the last expression in \eqref{ultima}, and we see that it is therefore dominated by 
$
\|F\|_{L^{q'}(G;M^{r'})}.
$\par
This concludes the proof. 
\end{proof}
\section{Concluding remarks}\label{sec4}
\subsection{The motivation for modulation spaces}
Let us point out the main elements which led us to consider the modulation space $M^1$ and its dual $M^\infty$ as natural candidates for the dispersive estimate \eqref{dispersiva}. \par
Estimate \eqref{dispersiva} clearly does not hold with $M^1$ and $M^\infty$ replaced by $L^1$ and $L^\infty$, respectively, because, for example, the pointwise multiplication by $e^{it|x|^2}$ is a metaplectic operator but Lebesgue norms do not detect any decay as $|t|\to+\infty$. Hence we focused on a space which controls $L^1$ decay in space {\it and} $L^1$ decay in momentum, as $M^1$ does indeed.\par
 But in the course of the proof of Theorem \ref{mainthm} we also used in an essential way another property of $M^1$, namely that the
 set of operators $\widehat{U}$ are uniformly bounded on $M^1$ when $U=\pi(\widehat{U})$ varies in $U(2n,\R)$, as proved in \eqref{3.2}. \par
Motivated by these issues, it would be very interesting to get characterizations of function spaces, in particular modulation spaces, in terms of symplectic invariance. 
\subsection{Sharpness of the results} 
It is easy to see that the exponent $-1/2$ in \eqref{dispersiva} is sharp. In fact, one can apply that estimate to a Gaussian function $\psi$ and the metaplectic operator $\widehat{S}\psi(x)=c \sqrt{\det L}\,\psi (Lx)$ (for suitable $c\in\mathbb{C}$, $|c|=1$), with $L={\rm diag}(\lambda_1,\ldots,\lambda_n)$, $\lambda_1,\ldots,\lambda_n\geq 1$. We have $S=(\lambda_1^{-1},\ldots,\lambda_n^{-1},\lambda_1,\ldots,\lambda_n)$ (cf.\ \cite[Proposition 116]{DeGosson}) and
\[
\|\widehat{S}\psi\|_{M^\infty}\asymp (\lambda_1\ldots\lambda_n)^{-1/2},
\]
as proved in \cite[Lemma 3.2]{cordero2} (and in \cite[Lemma 1.8]{toft} in the case $\lambda_1=\ldots=\lambda_n$). \par
Let us observe that the exponent $4n$ in \eqref{coeff} is sharp as well; in fact Howe \cite{howe} proved that for fixed $\varphi_1,\varphi_2\in \mathcal{S}(\mathbb{R}^n)$ the matrix coefficients in general do not belong to $L^{4n}$.

\section*{Acknowledgments}
The authors are very indebted to Professors Elena Cordero, Michael Cowling, Jaques Faraut and Vladimir Uspenskiy for discussions and remarks which improved the paper in an essential way.

\end{document}